\definecolor{labelkey}{rgb}{0,0.08,0.45}
\definecolor{refkey}{rgb}{0,0.6,0.0}
\definecolor{Brown}{rgb}{0.45,0.0,0.05}
\definecolor{lime}{rgb}{0.00,0.8,0.0}
\definecolor{lblue}{rgb}{0.5,0.5,0.99}
\newcommand{\sepp}{\setlength{\itemsep}{-2pt}}
\newcommand{\nnn}{\ensuremath{{n\in{\mathbb N}}}}
\newcommand{\menge}[2]{\big\{{#1}~\big |~{#2}\big\}}
\newcommand{\fenv}[1]%
{\ensuremath{\,\overrightarrow{\operatorname{env}}_{#1}}}
\newcommand{\benv}[1]%
{\ensuremath{\,\overleftarrow{\operatorname{env}}_{#1}}}
\newcommand{\RR}{\ensuremath{\mathbb R}}
\newcommand{\NN}{\ensuremath{\mathbb N}}
\newcommand{\SE}{\ensuremath{{\mathcal S}}}
\newcommand{\ran}{\ensuremath{\operatorname{ran}}}
\newcommand{\Id}{\ensuremath{\operatorname{Id}}}
\newcommand{\TBA}{T_{B,A}}
\newcommand{\TAB}{T_{A,B}}
\crefname{equation}{}{equations}
\crefname{chapter}{Appendix}{chapters}
\crefname{item}{}{items}
\newtheorem{theorem}{Theorem}[section]
\newtheorem{lem}[theorem]{Lemma}
\newtheorem{cor}[theorem]{Corollary}
\newtheorem{prop}[theorem]{Proposition}
\newtheorem{thm}[theorem]{Theorem}%[section]
\newtheorem{example}[theorem]{Example}
\newtheorem{rem}[theorem]{Remark}
\theoremstyle{remark}
\providecommand{\abs}[1]{\lvert#1\rvert}
\providecommand{\norm}[1]{\lVert#1\rVert}
\providecommand{\bk}[1]{\left(#1\right)}
\providecommand{\minibk}[1]{(#1)}
\providecommand{\stb}[1]{\left\{#1\right\}}
\providecommand{\innp}[1]{\langle#1\rangle}
\providecommand{\RA}{\Rightarrow}
\providecommand{\RR}{\mathbb{R}}
\providecommand{\ran}{\operatorname{ran}}
\newcommand{\fix}{\ensuremath{\operatorname{Fix}}}
\providecommand{\gra}{\operatorname{gra}}
\providecommand{\Id}{\operatorname{{ Id}}}
\providecommand{\fady}{\varnothing}
\providecommand{\rras}{\rightrightarrows}
\providecommand{\NN}{\mathbb{N}}
\providecommand{\fix}{\operatorname{Fix}}
\providecommand{\ran}{\operatorname{ran}}
\providecommand{\Id}{\operatorname{Id}}
\providecommand{\nnn}{{n\in\NN}}
\providecommand{\fady}{\varnothing}
\providecommand{\ri}{\operatorname{ri}}
\providecommand{\RR}{\mathbb{R}}
\providecommand{\NN}{\mathbb{N}}
\definecolor{myblue}{rgb}{.8, .8, 1}
  \newcommand*\mybluebox[1]{%
    \colorbox{myblue}{\hspace{1em}#1\hspace{1em}}}
\begin{document}
%-------------------------------------------------------------------------
%\tikzstyle{decision} = [diamond, draw, fill=blue!50]
%\tikzstyle{line} = [draw, -stealth, thick]
%\tikzstyle{elli}=[draw, ellipse, fill=red!50,minimum height=8mm, text width=5em, text centered]
%\tikzstyle{block} = [draw, rectangle, fill=blue!50, text width=8em, text centered, minimum height=15mm, node distance=10em]
%
\title{\textsc
On the order of the operators in 
the Douglas--Rachford algorithm}
\author{
Heinz H.\ Bauschke\thanks{
Mathematics, University
of British Columbia,
Kelowna, B.C.\ V1V~1V7, Canada. E-mail:
\texttt{heinz.bauschke@ubc.ca}.}
~and Walaa M.\ Moursi\thanks{
Mathematics, University of
British Columbia,
Kelowna, B.C.\ V1V~1V7, Canada. E-mail:
\texttt{walaa.moursi@ubc.ca}.}}
\date{May 11, 2015}
\maketitle
\begin{abstract}
\noindent
The Douglas--Rachford algorithm is a popular method for finding
zeros of sums of monotone operators. 
By its definition, the Douglas--Rachford operator
is not symmetric with respect to the order 
of the two operators.
In this paper we provide 
a systematic study of the two 
possible Douglas--Rachford operators.
We show that the reflectors of the underlying operators 
act as bijections between the 
 fixed points sets of the two Douglas--Rachford operators.
Some elegant formulae arise under 
additional assumptions.
Various examples illustrate our results. 
\end{abstract}
{\small
\noindent
{\bfseries 2010 Mathematics Subject Classification:}
{Primary 
47H09, %Contraction-type mappings, nonexpansive mappings, $A$-proper mappings,
90C25. %Convex programming
%65K10, %Optimization and variational techniques
Secondary 
47H05, %Monotone operators and generalizations
%90C46, %Optimality conditions, duality
49M27, %Decomposition Methods
%47H10, %Fixed-point theorems
%47H14, %Perturbations of nonlinear operators
%49M27, %Decomposition Methods
%49M29, %Methods involving duality etc.
%49N15, %Duality theory
65K05. %Mathematical programming methods
%49M37, %Methods of nonlinear programming type
%65K05,  %Mathematical programming methods
%65K10.  %Optimization and variational techniques
}

\noindent {\bfseries Keywords:}
Affine subspace,
Attouch--Th\'era duality, 
Douglas--Rachford splitting operator,
fixed point,
maximally monotone operator,
normal cone operator,
projection operator.
}
\section{Introduction}
Throughout this paper we shall assume that
$X$ is a real Hilbert space, 
with inner product $\innp{\cdot,\cdot}$ and
induced norm $\norm{\cdot}$. We also assume that  
$A:X\rras X$ and $B:X\rras X$ are maximally monotone operators\footnote{
Recall that 
$A:X\rras X$ is \emph{monotone} if whenever
the pairs 
$(x,u)$ and $(y,v)$ lie in $\gra A$
we have
$\innp{x-y,u-v}\ge 0$, and 
 is \emph{maximally monotone} 
if it is monotone and 
any proper enlargement of the graph of 
$A$ (in terms of set inclusion) 
does not preserve the monotonicity of $A$.}.
The \emph{resolvent} 
and the \emph{reflected resolvent}
associated with $A$ are $J_A=(\Id+A)^{-1}$
and 
$R_A=2J_A-\Id$, respectively\footnote{The identity operator on
$X$ is denoted by $\Id$. It is well-known that,
when $A$ is maximally monotone, 
$J_A$ is single-valued, maximally 
monotone and 
firmly nonexpansive
 and $R_A$ is nonexpansive.}. 
The sum problem for $A$ and $B$
is to \emph{find $x\in X$ such that $x\in (A+B)^{-1}0$}.
When $(A+B)^{-1}(0)\neq \fady $,
 the Douglas--Rachford splitting method
can be used to solve the sum problem.
The Douglas--Rachford 
splitting operator 
\cite{L-M79} associated with
the ordered pair of
operators $(A,B)$ is 
\begin{empheq}[box=\mybluebox]{equation}
\label{def:T}
\TAB:=\tfrac{1}{2}(\Id+R_BR_A)
=\Id-J_A+J_B R_A.
\end{empheq} 
By definition, the Douglas--Rachford 
splitting operator is dependent on the order
of the operators $A$ and $B$, even though the sum problem
remains unchanged
when interchanging $A$ and $B$.  
\emph{The goal of this paper is to investigate
the connection between the operators 
$\TAB$ and $\TBA$.}
Our main results
can be summarized as follows.
\begin{itemize}
\item
We show that $R_A$
is an isometric\footnote{Suppose that
$C$ and $D$ are two nonempty subsets of $X$.
We recall that
$Q:C\to D$ is an isometry 
if $(\forall x\in C)(\forall y\in C)$
 $\norm{Qx-Qy}=\norm{x-y}$.
 The set of fixed points of $T$ is
$\fix T:=\menge{x\in X}{x=Tx}$.} 
bijection from the fixed points set
of $\TAB$ to that of $ \TBA$,
with inverse $R_B:\fix \TBA\to\fix \TAB$
(see \cref{thm:mi}).
\item
When $A$ is an affine relation, we have
$(\forall\nnn)$ $R_A\TAB^n=\TBA^n R_A$.
In particular\footnote{
Throughout the paper we use 
$N_C$ and $P_C$ to denote 
the \emph{normal cone} and \emph{projector} associated
with a nonempty closed convex subset $C$ of $X$ respectively.},
when $A=N_U$ where $U$ is a closed affine subspace
of $X$, 
we have $(\forall \nnn)$
$\TAB^n=R_A\TBA^n R_A$ 
and $\TBA^n=R_A\TAB^n R_A$
(see \cref{prop:TAB:TBA}\ref{prop:TAB:TBA:ii} 
and \cref{cor:T:Ts}\ref{prop:TAB:TBA:iv}). 
\item
Our results connect to the recent 
linear and finite convergence 
results (see \cref{rem:big}) for the Douglas--Rachford
algorithm (see \cite{Ar-Br2013}, \cite{Ar-Br2014},
\cite{BDNP15}, \cite{BNP2014}, \cite{HL13} and \cite{HLN14}). 
\end{itemize}
In \cref{S:2},  we present the main results and various
examples. 
The notation we adopt is standard and follows, 
e.g., \cite{BC2011} and \cite{Rock98}.
\section{Results}
\label{S:2}
\mbox{}\par\nopagebreak\vspace{-2\baselineskip}\vspace{-
\abovedisplayskip}
\noindent
We recall that the Attouch--Th\'{e}ra dual pair of $(A,B)$
(see \cite{AT})
is the pair
\footnote{We set
$A^{\ovee}:= (-\Id)\circ A\circ(-\Id)$ and
$ A^{-\ovee}:=(A^{-1})^\ovee=(A^\ovee)^{-1}$.}
$(A^{-1},B^{-\ovee})$.
Following \cite{JAT2012}, we set 
$
Z:=Z_{(A,B)}=(A+B)^{-1}(0)
$
and 
$
K:=K_{(A,B)}=(A^{-1}+B^{-\ovee})^{-1}(0),
$ %\end{equation}
to denote, respectively, the primal and dual solutions. 
One easily verifies that 
\begin{equation}\label{eq:ZK:s}
Z_{(B,A)}=(B+A)^{-1}(0)=Z
\qquad\text{and }\qquad 
K_{(B,A)}=(B^{-1}+A^{-\ovee})^{-1}(0)=-K.
\end{equation}
We further recall 
(see \cite[Lemma~2.6(iii)]{Comb04} and \cite[Corollary~4.9]{JAT2012})
that
\begin{equation}\label{Fact:collect:Z:fix}
Z=J_A(\fix \TAB) \quad\text{and}\quad K=(\Id-J_A)(\fix \TAB),
\end{equation}
and we will make use of the following useful identity 
which can be verified using 
\cref{def:T}:
\begin{equation}
\label{prop:TAB:TBA:i}
R_A\TAB-\TBA R_A=2J_A\TAB-J_A-J_AR_BR_A.
\end{equation}

We are now ready for the first main result.
\begin{thm}\label{thm:mi}
$R_A$ is an isometric bijection 
from $\fix \TAB$ to $\fix \TBA$, 
with isometric inverse $R_B$. 
Moreover, we have the following 
commutative diagram:
\begin{center}
\begin{tikzpicture}[scale=6]
\node (A) at (0,0.75) {$\fix \TAB$};
\node (B) at (1,0.75) {$\fix \TBA$};
\node (C) at (0,0) {$\SE_{(A,B)}$};
\node (D) at (1,0) {$\SE_{(B,A)}$};
%%%%%%%%
\draw
([yshift= 1pt]A.east) edge[->,>=angle 90] node[above]
{ $R_A$} ([yshift= 1pt]B.west)%arrow 1
([yshift= -1pt]A.east) edge[<-,>=angle 90] node[below]
{ $R_B$} ([yshift= -1pt]B.west);%arrow 2
\draw
([yshift= 0pt]C.east) edge[<->,>=angle 90] node[above]
{ $\Id\times(-\Id)$} ([yshift= 0pt]D.west);%arrow 3
%([yshift= -1pt]C.east) edge[<-,>=angle 90] node[below]
%{ $\Id\times(-\Id)$} ([yshift= -1pt]D.west);%arrow 4
%%%%
\draw
[<-,postaction={decorate,decoration={text along path,raise=-10pt,
text align=center,text={${+}{\;}$}}}]
([xshift=-1pt,yshift=-3.5 pt]A.north) to node[swap] {} 
 ([xshift= -1pt,yshift=3.8 pt]C.south);
%% %%
\draw[->,postaction={decorate,decoration={text along path, raise=5pt,text align=center,text={${(J_A,\Id-J_A)\circ \Delta}{\;}$}}}] 
([xshift=1pt,yshift=-3.5 pt]A.north) to node[swap] {} 
 ([xshift= 1pt,yshift=3.8 pt]C.south);
(A) to node[swap] {} (C);
\draw
[<-,
postaction={decorate,decoration={text along path,raise=-10pt,
text align=center,text={${+}{\;}$}}}]
([xshift=-1pt,yshift=-3.5 pt]B.north) to node[swap] {} 
 ([xshift= -1pt,yshift=3.8 pt]D.south);
B to node[swap] {} D;
% %%
    \draw[->,postaction={decorate,decoration={text along path, 
    raise=5pt,text align=center,
    text={${(J_B,\Id-J_B)\circ \Delta}{\;}$}}}] 
 ([xshift=1pt,yshift=-3.5 pt]B.north) to node[swap] {} 
 ([xshift= 1pt,yshift=3.8 pt]D.south);
\end{tikzpicture}
\end{center}
Here $\SE_{(A,B)}:=\stb{(z,-w)\in 
X\times X~|~-w\in Bz, w\in Az}$ is 
the extended solution set\footnote{For 
further information 
on the extended solution set,
we refer the reader to
\cite[Section~2.1]{Eck-Sv}.
} for the pair $(A,B)$, and
$\Delta\colon X\to X\times X \colon x\mapsto(x,x)$. 
In particular, 
we have 
\begin{equation}
\label{RA:para:case}
R_A\colon\fix\TAB\to \fix\TBA\colon z+k\mapsto z-k,
\end{equation}
where $(z,k)\in \SE_{(A,B)}$.
%\footnote{We 
%recall that (see \cite[Proposition~2.4(v)]{JAT2012}) 
%$Z\neq \fady \iff K\neq \fady$.}. 
\end{thm}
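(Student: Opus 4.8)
\emph{Bijection and inverse.} The plan is to reduce everything to two ingredients: the identification of $\fix\TAB$ with the fixed point set of the reflected composition $R_BR_A$, and the nonexpansiveness of $R_A$ and $R_B$. By \eqref{def:T}, $\TAB=\tfrac12(\Id+R_BR_A)$, so $\fix\TAB=\fix(R_BR_A)$; symmetrically $\fix\TBA=\fix(R_AR_B)$. Let $x\in\fix\TAB$, i.e.\ $R_BR_Ax=x$. Applying $R_A$ to this identity yields $R_AR_B(R_Ax)=R_Ax$, that is, $R_Ax\in\fix(R_AR_B)=\fix\TBA$; and the same identity $R_B(R_Ax)=x$ exhibits $R_B$ as a left inverse of $R_A$ on $\fix\TAB$. (Alternatively one can invoke \eqref{prop:TAB:TBA:i}: for $x\in\fix\TAB$ one has $R_BR_Ax=x$, so its right-hand side equals $2J_Ax-J_Ax-J_Ax=0$, whence $R_Ax=\TBA R_Ax$.) Interchanging the roles of $A$ and $B$ shows $R_B$ maps $\fix\TBA$ into $\fix\TAB$ with $R_A\circ R_B=\Id$ there. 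Hence $R_A\colon\fix\TAB\to\fix\TBA$ and $R_B\colon\fix\TBA\to\fix\TAB$ are mutually inverse, in particular bijections.

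\emph{Isometry.} Both $R_A$ and $R_B$ are nonexpansive. For $x,y\in\fix\TAB$, nonexpansiveness of $R_A$ gives $\norm{R_Ax-R_Ay}\le\norm{x-y}$, while nonexpansiveness of $R_B$ together with $x=R_BR_Ax$ and $y=R_BR_Ay$ gives $\norm{x-y}=\norm{R_BR_Ax-R_BR_Ay}\le\norm{R_Ax-R_Ay}$; the two inequalities force equality. Thus $R_A$ is an isometry on $\fix\TAB$, and the same squeeze argument shows $R_B$ is an isometry on $\fix\TBA$.

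\emph{The diagram and \eqref{RA:para:case}.} For $x\in\fix\TAB$ put $z:=J_Ax$ and $k:=x-J_Ax$, so $x=z+k$; by definition of the resolvent $k\in Az$, and from $J_Ax=J_BR_Ax$ (i.e.\ $z=J_B(2z-x)$) we get $-k=z-x\in Bz$, hence $(z,k)\in\SE_{(A,B)}$. Conversely, if $(z,k)\in\SE_{(A,B)}$ then $k\in Az$ forces $J_A(z+k)=z$ and $-k\in Bz$ forces $J_BR_A(z+k)=J_B(z-k)=z$, so $z+k\in\fix\TAB$; thus $x\mapsto(J_Ax,(\Id-J_A)x)$ and $(z,k)\mapsto z+k$ are mutually inverse bijections between $\fix\TAB$ and $\SE_{(A,B)}$ (this is \eqref{Fact:collect:Z:fix} made explicit), and likewise on the $(B,A)$ side; moreover $\Id\times(-\Id)\colon\SE_{(A,B)}\to\SE_{(B,A)}$ is a bijection by \eqref{eq:ZK:s}. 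It then remains to check the central square: for $x\in\fix\TAB$ we have $R_Ax=2J_Ax-x=2z-(z+k)=z-k$, which is \eqref{RA:para:case}, and since $J_BR_Ax=J_Ax=z$ we get $(\Id-J_B)R_Ax=(z-k)-z=-k$, so $R_Ax$ corresponds to $(z,-k)=(\Id\times(-\Id))(z,k)$ in $\SE_{(B,A)}$; this is exactly the commutativity, and the outer triangles commute by construction of the vertical maps.

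\emph{Main obstacle.} The genuinely delicate step is the identification of $\fix\TAB$ with $\SE_{(A,B)}$: one must check both that $x\mapsto(J_Ax,(\Id-J_A)x)$ takes values in $\SE_{(A,B)}$ and that summing the two coordinates returns an element of $\fix\TAB$ (i.e.\ that these maps are honest mutual inverses), and one must keep the sign conventions consistent all around the square. Everything else — the bijection $R_A\leftrightarrow R_B$, the isometry, and \eqref{RA:para:case} itself — is immediate once the fixed-point equation is rewritten as $R_BR_Ax=x$ and the nonexpansiveness of $R_A$ and $R_B$ is invoked.
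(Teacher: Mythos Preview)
Your proof is correct and follows essentially the same approach as the paper: the identification $\fix\TAB=\fix(R_BR_A)$, the application of $R_A$ to the fixed-point equation to land in $\fix\TBA$, the mutual-inverse argument, and the nonexpansiveness squeeze for the isometry are all exactly what the paper does. The one noteworthy difference is in the treatment of the commutative diagram: the paper simply cites \cite[Remark~3.9 and Theorem~4.5]{JAT2012} for the bijection between $\fix\TAB$ and $\SE_{(A,B)}$, whereas you verify directly that $x\mapsto(J_Ax,(\Id-J_A)x)$ and $(z,k)\mapsto z+k$ are mutual inverses and then check the square commutes by computing $(J_B,\Id-J_B)(R_Ax)=(z,-k)$. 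This makes your argument self-contained at the cost of a few extra lines, and it has the side benefit of making \eqref{RA:para:case} fall out of the same computation rather than requiring a separate appeal to the cited decomposition.
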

\begin{proof}
Let $x\in X$ and note that \cref{def:T}
implies that $\fix\TAB=\fix R_BR_A$ and 
$\fix\TBA=\fix R_AR_B$. Now
$x\in \fix \TAB \iff x=R_BR_Ax
\RA R_A x=R_AR_BR_Ax
\iff R_A x\in \fix R_AR_B=\fix\TBA$,
which proves that $R_A$ maps 
$\fix \TAB$ into $\fix\TBA$.
By interchanging $A$
and $B$ one sees that
$R_B$ maps 
$\fix \TBA$ into $\fix\TAB$.
We now show that
$R_A$ maps $\fix \TAB$ onto $\fix\TBA$.
To this end, let $y\in \fix \TBA$ 
and note that
$R_B y \in \fix\TAB$ and
$R_A R_By=y$, which proves that
$R_A$ maps $\fix \TAB$ onto $\fix\TBA$.
The same argument holds for $R_B$.
Finally since $(\forall x\in \fix \TAB) $ 
$R_BR_A x=x$,  this proves that 
$R_A $ is a bijection from $\fix \TAB$
to $\fix \TBA$ with the desired inverse.
To prove that $R_A:\fix \TAB\to \fix \TBA$ is 
an isometry
note that $(\forall x\in \fix \TAB)$ $(\forall y\in \fix \TAB)$
 we have $\norm{x-y}=\norm{R_BR_Ax-R_BR_Ay}\le 
 \norm{R_Ax-R_Ay}\le\norm{x-y}$.
 
  We now turn to the diagram.
 The correspondence of $\fix \TAB$ and $\fix \TBA$
follows from %\cref{thm:mi} 
our earlier argument.
The correspondences of $\fix \TAB$ 
and $\SE_{(A,B)}$, and $\fix \TBA$ 
and $\SE_{(B,A)}$
follow from combining
\cite[Remark~3.9 and Theorem~4.5]{JAT2012}
applied to $\TAB$ and $\TBA$ respectively.
The fourth correspondence
is obvious from the definition 
of $\SE_{(A,B)}$
and $\SE_{(B,A)}$.
To prove \cref{RA:para:case}
we let $y\in \fix \TAB$ and recall that in view of 
\cite[Theorem~4.5 and Remark~3.9]{JAT2012}
that
$y=z+k$ where $(z,k)\in \SE_{(A,B)}$
and
$R_A(z+k)=(J_A-(\Id-J_A))(z+k)
=J_A(z+k)-(\Id-J_A)(z+k)=z-k$.
\end{proof}
\begin{rem}
In view of
{\rm\cite[Remark~3.9, 
Theorem~4.5 and Corollary~5.5(iii)]{JAT2012}},
when $A$ and $B$ are
paramonotone\footnote{
See \cite{Iusem98} for definition 
and detailed discussion on
paramonotone operators.} 
(as is always the case
when $A$ and $B$ are subdifferential operators
of proper convex lower semicontinuous
functions),
we can replace $\SE_{(A,B)}$
and  $\SE_{(B,A)}$ by, respectively,
$Z\times K$ and $Z\times (-K)$. 
\end{rem}
\begin{lem}\label{lem:RP}
Suppose that $A$ is an affine relation.
Then
\begin{enumerate}
\item\label{lem:RP:0}
$J_A$
is affine and
$J_AR_A=2J_A^2-J_A=R_AJ_A$.
\suspend{enumerate}
If $A=N_U$, where 
$U$ is a closed affine subspace of $X$, then
we have additionally:
\resume{enumerate}
\item\label{lem:RP:i}
$P_U=J_A=J_AR_A=R_AJ_A$
 and 
$(\Id-J_A)R_A=J_A-\Id$.
\item\label{lem:RP:ii}
$R_A^2=\Id$, $R_A=R_A^{-1}$, and 
$R_A:X\to X$ is an isometric bijection.
\end{enumerate}
\end{lem}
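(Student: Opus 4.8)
The plan is to prove the three items of \cref{lem:RP} in order, exploiting the algebra of affine relations and then specializing to $A=N_U$.

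\textbf{Item \ref{lem:RP:0}.} First I would recall that when $A$ is an affine relation (i.e., $\gra A$ is an affine subspace of $X\times X$), the resolvent $J_A=(\Id+A)^{-1}$ is an affine relation as well; since $A$ is maximally monotone, $J_A$ is in fact a single-valued affine operator $X\to X$ (its graph is an affine subspace that is also the graph of a function with full domain). Consequently $R_A=2J_A-\Id$ is affine too. The identity $J_AR_A=2J_A^2-J_A$ then requires showing $J_A$ commutes with the scaling-by-$2$-and-shift in the right way; the clean route is to use that an affine operator $T$ satisfies $T(2u-v)=2Tu-Tv$ whenever... — more precisely, writing $J_A=L+b$ with $L$ linear and $b\in X$ a fixed vector, one computes $J_AR_A=J_A(2J_A-\Id)=L(2J_A-\Id)+b=2LJ_A-L+b=2(J_A-b)J_A/\!\ldots$ — the honest computation is $J_A R_A x = L(2J_Ax-x)+b=2LJ_Ax-Lx+b = 2(J_A^2x-b)-(J_Ax-b)=2J_A^2x-J_Ax$, using $LJ_Ax=J_A^2x-b$ and $Lx=J_Ax-b$. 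The same bookkeeping gives $R_AJ_Ax=2J_A^2x-J_Ax$, so the two agree.

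\textbf{Item \ref{lem:RP:i}.} Now specialize to $A=N_U$ with $U$ a closed affine subspace. Here $J_{N_U}=P_U$ is the (affine) orthogonal projector onto $U$, which is well known and also follows from item \ref{lem:RP:0} together with the standard fact $J_{N_C}=P_C$ for closed convex $C$. The key extra property is idempotency of the projector onto an affine subspace: $P_U^2=P_U$, equivalently $J_A^2=J_A$. Feeding $J_A^2=J_A$ into item \ref{lem:RP:0} collapses $J_AR_A=2J_A^2-J_A=2J_A-J_A=J_A$ and likewise $R_AJ_A=J_A$, giving the chain $P_U=J_A=J_AR_A=R_AJ_A$. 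For the last identity, $(\Id-J_A)R_A=R_A-J_AR_A=(2J_A-\Id)-J_A=J_A-\Id$.

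\textbf{Item \ref{lem:RP:ii}.} Finally, $R_A^2=(2J_A-\Id)R_A=2J_AR_A-R_A=2J_A-(2J_A-\Id)=\Id$ by item \ref{lem:RP:i}; hence $R_A$ is its own inverse, in particular a bijection $X\to X$. That it is an isometry is immediate from nonexpansiveness of $R_A$ (true for any maximally monotone $A$): for all $x,y$, $\norm{R_Ax-R_Ay}\le\norm{x-y}=\norm{R_A^2x-R_A^2y}\le\norm{R_Ax-R_Ay}$, forcing equality.

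I expect the only genuinely substantive point to be item \ref{lem:RP:0} — namely the clean justification that affineness of $\gra A$ forces $J_A$ to be a single-valued affine map with full domain, and the careful (but routine) linear-algebra verification of $J_AR_A=2J_A^2-J_A=R_AJ_A$; once that is in hand, items \ref{lem:RP:i} and \ref{lem:RP:ii} are short consequences of $P_U^2=P_U$ and nonexpansiveness. A minor subtlety worth a sentence is that $U$ being an \emph{affine} (not linear) subspace makes $P_U$ affine rather than linear, but idempotency $P_U^2=P_U$ still holds, so nothing breaks.
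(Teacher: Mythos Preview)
Your proposal is correct and follows essentially the same route as the paper: cite/argue that $J_A$ is affine and expand $J_A(2J_A-\Id)$ to get item~\ref{lem:RP:0}, then feed in $P_U^2=P_U$ for item~\ref{lem:RP:i}, and finally combine $J_AR_A=J_A$ with nonexpansiveness for item~\ref{lem:RP:ii}. The only cosmetic difference is that in item~\ref{lem:RP:0} the paper invokes the affine-combination identity $J_A(2u-v)=2J_Au-J_Av$ directly (since $2+(-1)=1$), whereas you unpack it via $J_A=L+b$; both are the same one-line computation.
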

\begin{proof}
\ref{lem:RP:0}:
The fact that $J_A$ is affine follows from 
\cite[Theorem~2.1(xix)]{BMW2012}.
Hence $J_AR_A=J_A(2J_A-\Id)
=2J_A^2-J_A=R_AJ_A$.

\ref{lem:RP:i}: It follows from 
\cite[Example~23.4]{BC2011}
that $P_U=J_A$. Now using 
\ref{lem:RP:0} we have
$R_AJ_A=J_AR_A
%=P_U(2P_U-\Id)
=2P_U^2-P_U=2P_U-P_U=P_U=J_A$.
To prove the last identity note that 
 by
\ref{lem:RP:0} we have 
$(\Id-J_A)R_A
=R_A-J_AR_A=2P_U-\Id+P_U
=P_U-\Id
$.

\ref{lem:RP:ii}:
Because $R_A$
is affine,
it follows from \ref{lem:RP:i}
that
$
R_A^2=R_A(2J_A-\Id)=2R_AJ_A-R_A
%=4J_A^2-2J_A-2J_A+\Id
=2P_U-2(P_U-\Id)=\Id.
$
Finally let $x,y\in X$. Since $R_A$
is nonexpansive we have 
$\norm{x-y}=\norm{R_A^2x-R_A^2y}
\le \norm{{R_A}x-{R_A}y}\le  \norm{x-y}$,
hence all the inequalities 
become equalities which 
completes the proof.
\end{proof}

We now turn to the iterates of the 
Douglas--Rachford algorithm.
\begin{prop}\label{prop:TAB:TBA}
Suppose that $A$ is an affine relation. 
Then the following hold:
\begin{enumerate}
\item\label{prop:TAB:TBA:ii}
$(\forall n\in \NN)$ we have
$
R_A\TAB^n=\TBA^n R_A.
$
\item\label{prop:TAB:TBA:iii}
$
R_A Z=J_A\fix \TBA$ and  %\quad\text{and}\quad 
$R_A K=(J_A-\Id)(-\fix \TBA).
$
\suspend{enumerate}
If $B$ is an affine relation, then we additionally have:
\resume{enumerate}
%\end{equation}
\item\label{prop:TAB:TBA:vi}
$\TAB R_BR_A=R_BR_A \TAB$.
\item\label{prop:TAB:TBA:viv}
$4(\TAB\TBA- \TBA\TAB)=R_BR_A^2R_B-R_AR_B^2R_A$.
Consequently,
%\begin{equation}\label{eq:cond:af:eq:pre}
$
\TAB\TBA=
\TBA\TAB \iff R_BR_A^2R_B=R_AR_B^2R_A.
$
%\end{equation}
\item\label{prop:TAB:TBA:vv}
If $R_A^2=R_B^2=\Id$, then\footnote{
 In passing, we point out that this is equivalent to saying that
 $A=N_U$ and $B=N_V$ where 
 $U$ and $V$ are closed affine subspaces of $X$. 
 Indeed, 
 $R_A^2=\Id\iff J_A=J_A^2$
 and therefore we conclude 
 that $\ran J_A=\fix J_A$.
Combining with
 \cite[Theorem~1.2]{Zar71:1}
 yields that $J_A$ is a projection, hence
 $A$ is an affine normal cone operator
 using \cite[Example~23.4]{BC2011}.
 }
$%\begin{equation}\label{eq:cond:af:eq}
%\text{~~are isometries~~}
\TAB\TBA=
\TBA\TAB .
$%\end{equation}
\end{enumerate}
\end{prop}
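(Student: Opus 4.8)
The plan is to derive \ref{prop:TAB:TBA:vv} directly from \ref{prop:TAB:TBA:viv} together with \cref{lem:RP}\ref{lem:RP:ii}. Recall that under the hypothesis $R_A^2 = R_B^2 = \Id$ one has (by \cref{lem:RP}, since $R_A^2=\Id$ forces $A$, and symmetrically $B$, to be affine normal-cone operators on closed affine subspaces) that both $A$ and $B$ are affine relations, so \ref{prop:TAB:TBA:viv} applies. Hence $4(\TAB\TBA - \TBA\TAB) = R_BR_A^2R_B - R_AR_B^2R_A$. Substituting $R_A^2 = \Id$ in the first term and $R_B^2 = \Id$ in the second term gives $R_B R_B - R_A R_A = R_B^2 - R_A^2 = \Id - \Id = 0$, and therefore $\TAB\TBA = \TBA\TAB$.

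The only subtlety worth spelling out is the logical dependency: \ref{prop:TAB:TBA:viv} is stated under the standing assumption ``$B$ is an affine relation'' (with $A$ affine from the proposition's overall hypothesis), so I must first certify that the hypothesis $R_A^2 = R_B^2 = \Id$ indeed places us in that regime. As indicated in the footnote to \ref{prop:TAB:TBA:vv}, $R_A^2 = \Id$ is equivalent to $J_A = J_A^2$, whence $\ran J_A = \fix J_A$; invoking \cite[Theorem~1.2]{Zar71:1} shows $J_A$ is a projection, and \cite[Example~23.4]{BC2011} then identifies $A = N_U$ for a closed affine subspace $U$ — in particular $A$ (and, by the same token, $B$) is an affine relation. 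So the chain \ref{prop:TAB:TBA:viv} $\Rightarrow$ \ref{prop:TAB:TBA:vv} is legitimate.

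I expect no real obstacle here: once \ref{prop:TAB:TBA:viv} is in hand, \ref{prop:TAB:TBA:vv} is a one-line specialization obtained by plugging the involution identities into the right-hand side. The ``hard part,'' such as it is, was already done in establishing \ref{prop:TAB:TBA:viv} (expanding $4(\TAB\TBA - \TBA\TAB)$ via \cref{def:T} and collecting terms); the present item merely records the clean consequence that when both reflected resolvents are involutions the Douglas--Rachford operators for the two orderings commute. A short proof reading roughly ``By \cref{lem:RP}\ref{lem:RP:ii} (applied to $A$ and to $B$), both $A$ and $B$ are affine relations, so \ref{prop:TAB:TBA:viv} yields $4(\TAB\TBA - \TBA\TAB) = R_BR_A^2R_B - R_AR_B^2R_A = R_B^2 - R_A^2 = \Id - \Id = 0$.'' should suffice.
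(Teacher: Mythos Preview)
Your core argument is correct: plugging $R_A^2=\Id$ and $R_B^2=\Id$ into \ref{prop:TAB:TBA:viv} gives $4(\TAB\TBA-\TBA\TAB)=R_B^2-R_A^2=0$. One unnecessary detour: you spend a paragraph arguing that $R_A^2=R_B^2=\Id$ forces $A$ and $B$ to be affine so that \ref{prop:TAB:TBA:viv} applies. But items \ref{prop:TAB:TBA:vi}--\ref{prop:TAB:TBA:vv} are already stated under the standing hypothesis that \emph{both} $A$ and $B$ are affine relations; the involution condition in \ref{prop:TAB:TBA:vv} is an extra assumption layered on top, not a substitute. So you may simply invoke \ref{prop:TAB:TBA:viv} without further justification.

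The paper takes a slightly different route: it derives \ref{prop:TAB:TBA:vv} from \ref{prop:TAB:TBA:vi} rather than \ref{prop:TAB:TBA:viv}. The argument implicit there is that when $R_A$ and $R_B$ are involutions one has $(R_BR_A)^{-1}=R_A^{-1}R_B^{-1}=R_AR_B$; since \ref{prop:TAB:TBA:vi} says $\TAB$ commutes with $R_BR_A$, it also commutes with the inverse $R_AR_B$, and hence with $\TBA=\tfrac{1}{2}(\Id+R_AR_B)$. Both approaches are one-liners; yours is the more computational specialization of the explicit commutator formula, while the paper's is the more structural observation that an operator commuting with an invertible map commutes with its inverse. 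Either is perfectly acceptable.
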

\begin{proof}
\ref{prop:TAB:TBA:ii}: 
It follows from 
\cref{prop:TAB:TBA:i}, 
\cref{lem:RP}\ref{lem:RP:0}
and \cref{def:T}
that $R_A\TAB-\TBA R_A=2J_A\TAB-J_A-J_AR_BR_A
=J_A(2\TAB-\Id)-J_AR_BR_A
=J_A(2(\tfrac{1}{2}(\Id+R_BR_A))-\Id)
-J_AR_BR_A=J_AR_BR_A-J_AR_BR_A=0$,
which proves the claim when $n=1$.
The general proof follows by induction.

\ref{prop:TAB:TBA:iii}:
Using \cref{Fact:collect:Z:fix},
\cref{lem:RP}\ref{lem:RP:0} 
and \cref{thm:mi},
we have $R_AZ=R_A J_A(\fix \TAB)
=J_A R_A(\fix \TAB)=J_A (\fix \TBA)$.
 Now using that
 the inverse resolvent identity\footnote{
Recall the when $A$ is maximally monotone the 
inverse resolvent identity states that
$J_A+J_{A^{-1}}=\Id$.
Consequently, $R_{A^{-1}}=-R_A$.
},
 \cref{lem:RP}\ref{lem:RP:0}
 applied to $A^{-1}$ and \cref{thm:mi}, 
 we obtain
 $R_AK=-R_{A^{-1}}J_{A^{-1}}(\fix \TAB)=
 -J_{A^{-1}}R_{A^{-1}}(\fix \TAB)
 = -J_{A^{-1}}(-R_A\fix \TAB)
 =(J_A-\Id)(-\fix \TBA)
 $.% which proves the second identity.

 \ref{prop:TAB:TBA:vi}:
Note that $\TAB$ and
$\TBA$ are affine. 
It follows from \cref{def:T} that
$\TAB R_BR_A=\TAB(2\TAB-\Id)
=2\TAB^2-\TAB=(2\TAB-\Id)\TAB
=R_BR_A\TAB$. 

\ref{prop:TAB:TBA:viv}
We have%using that $\TAB$ and
%$\TBA$ are affine we have 
\begin{align}
4(\TAB\TBA-\TBA\TAB)
 &=4 \bk{\tfrac{1}{2}(\Id+R_BR_A)\tfrac{1}{2}(\Id+R_AR_B)
-\tfrac{1}{2}(\Id+R_AR_B)\tfrac{1}{2}(\Id+R_BR_A)}
\nonumber\\
 &=\Id+R_BR_A+R_AR_B+R_BR_A^2R_B%\nonumber\\
-(\Id+R_AR_B+R_BR_A
\nonumber\\
&+R_AR_B^2R_A)= R_BR_A^2R_B-R_AR_B^2R_A.
\end{align}
%\mbox{}\par\nopagebreak\vspace{-2\baselineskip}
%\vspace{-\abovedisplayskip}
\ref{prop:TAB:TBA:vv}: This is a direct consequence
of \ref{prop:TAB:TBA:vi}. 
\end{proof}

With regards to \cref{prop:TAB:TBA}\ref{prop:TAB:TBA:ii}, 
one may inquire whether the conclusion still
holds when $R_A$ is replaced by $R_B$.
We now give an example illustrating that
the answer to this question is negative. 
\begin{example}
Suppose that $X=\RR^2$, that $U=\RR\times\stb{0}$,
that $V=\stb{0}\times \RR_{+}$, that $A=N_U$
and that $B=N_V$.
Then $A$ is linear, hence $R_A\TAB=\TBA R_A$, however 
$R_B\TAB\neq\TBA R_B$ and $R_B\TBA\neq\TAB R_B$. 
\end{example}
\begin{proof}
The identity $R_A\TAB=\TBA R_A$
follows from applying 
\cref{prop:TAB:TBA}\ref{prop:TAB:TBA:ii}
with $n=1$. Now let $(x,y)\in \RR^2$.
Elementary calculations show that
$R_A(x,y)=(x,-y)$.
and $R_B(x,y)=(-x,\abs{y})$.
Consequently, 
\cref{def:T} implies that
$\TAB(x,y)=(0,y^+)$
and
$\TBA(x,y)=(0,y^{-})$
\footnote{For every $x\in\RR$, we set
$x^+:=\max\{x,0\}$ and $x^{-}:=\min\{x,0\}$}.
Therefore, 
%\begin{subequations}
%\begin{align}
$R_B\TAB(x,y)=(0,y^+)$,
%\label{sub:eq:i} 
$\TBA R_B(x,y)= (0,0),
$
$
R_B\TBA(x,y) =(0,{-y}^+)$,
and $
\TAB R_B(x,y)= (0, \abs{y}).
$
The conclusion then follows from comparing
the last four equations.
\end{proof}

We are now ready for our second main result.
\begin{thm}[\bf{When $A$ is normal cone 
of closed affine subspace}]
\label{cor:T:Ts}
Suppose that $U$ is a closed affine subspace
and that $A=N_U$. Then the following hold:
\begin{enumerate}
\item\label{prop:TAB:TBA:iv}
$(\forall\nnn)$ $
R_A\TBA^n=\TAB^n R_A
$,
$
\TBA^n=R_A\TAB^n R_A
$
and
$
\TAB^n=R_A\TBA^n R_A
$.
\item\label{prop:TAB:TBA:v} 
$R_A\colon\fix \TBA\to \fix \TAB$, 
$
Z=J_A(\fix\TBA)$, and $ K=(J_A-\Id)(\fix\TBA)$. 
\item\label{prop:TAB:TBA:vii}
Suppose that 
$V$ is a closed affine subspace
of $X$
and that $B=N_V$.
Then 
$\TAB R_AR_B=R_AR_B \TAB$
and
$\TAB\TBA=\TBA\TAB$.
\end{enumerate}
\end{thm}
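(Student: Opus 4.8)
The plan is to obtain all three parts from \cref{prop:TAB:TBA}, \cref{lem:RP}, and \cref{thm:mi}, the additional leverage over \cref{prop:TAB:TBA} coming from the fact that for $A=N_U$ the reflector $R_A$ is an involution. For part~\ref{prop:TAB:TBA:iv}: since $A=N_U$ is an affine relation, \cref{prop:TAB:TBA}\ref{prop:TAB:TBA:ii} already gives $R_A\TAB^n=\TBA^nR_A$ for every $\nnn$, while \cref{lem:RP}\ref{lem:RP:ii} gives $R_A^2=\Id$. I would then simply compose this identity with $R_A$ from the appropriate side: composing on the right gives $R_A\TAB^nR_A=\TBA^n$; composing on the left gives $\TAB^n=R_A\TBA^nR_A$; and composing the latter once more on the right gives $\TAB^nR_A=R_A\TBA^n$, i.e.\ $R_A\TBA^n=\TAB^nR_A$. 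These are exactly the three asserted identities.

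For part~\ref{prop:TAB:TBA:v}: given $x\in\fix\TBA$, part~\ref{prop:TAB:TBA:iv} with $n=1$ yields $\TAB(R_Ax)=R_A\TBA x=R_Ax$, so $R_Ax\in\fix\TAB$; hence $R_A$ maps $\fix\TBA$ into $\fix\TAB$ (and in fact bijectively, being the inverse of the bijection of \cref{thm:mi}, as $R_A=R_A^{-1}$). Next, \cref{thm:mi} gives $\fix\TBA=R_A(\fix\TAB)$, and I would combine this with the operator identities $J_AR_A=J_A$ and $(J_A-\Id)R_A=\Id-J_A$ from \cref{lem:RP}\ref{lem:RP:i} and with \cref{Fact:collect:Z:fix}, obtaining $J_A(\fix\TBA)=J_AR_A(\fix\TAB)=J_A(\fix\TAB)=Z$ and likewise $(J_A-\Id)(\fix\TBA)=(J_A-\Id)R_A(\fix\TAB)=(\Id-J_A)(\fix\TAB)=K$.

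For part~\ref{prop:TAB:TBA:vii}: now $B=N_V$ is also an affine relation, and applying \cref{lem:RP}\ref{lem:RP:ii} to both $A$ and $B$ gives $R_A^2=R_B^2=\Id$, so \cref{prop:TAB:TBA}\ref{prop:TAB:TBA:vv} immediately delivers $\TAB\TBA=\TBA\TAB$. For the remaining identity I would use $R_AR_B=2\TBA-\Id$ (from \cref{def:T}) to write $\TAB R_AR_B=2\TAB\TBA-\TAB=2\TBA\TAB-\TAB=R_AR_B\TAB$.

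The calculations are routine; the one place where care is needed is that $J_A=P_U$ is affine but generally not linear (when $U$ is not a linear subspace), so expressions such as the $-\fix\TBA$ occurring in \cref{prop:TAB:TBA}\ref{prop:TAB:TBA:iii} cannot be simplified by pulling the minus sign through $J_A-\Id$. The plan avoids this entirely by never relating $\fix\TBA$ to its reflection through the origin, instead applying the exact operator identities of \cref{lem:RP}\ref{lem:RP:i} directly to the image $R_A(\fix\TAB)=\fix\TBA$.
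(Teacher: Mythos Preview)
Your proposal is correct and follows essentially the same route as the paper: parts~\ref{prop:TAB:TBA:iv} and~\ref{prop:TAB:TBA:v} match the paper's proof almost verbatim (compose \cref{prop:TAB:TBA}\ref{prop:TAB:TBA:ii} with $R_A^2=\Id$, then combine \cref{thm:mi}, \cref{lem:RP}\ref{lem:RP:i}, and \cref{Fact:collect:Z:fix}). The only cosmetic difference is in part~\ref{prop:TAB:TBA:vii}: the paper obtains $\TAB R_AR_B=R_AR_B\TAB$ by two applications of the intertwining identity from part~\ref{prop:TAB:TBA:iv} (once for $A$, once for $B$), whereas you first establish $\TAB\TBA=\TBA\TAB$ and then exploit $R_AR_B=2\TBA-\Id$ together with the affinity of $\TAB$; both arguments are one-line and equivalent.
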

\begin{proof}
\ref{prop:TAB:TBA:iv}:
Let $\nnn$.
It follows from 
\cref{prop:TAB:TBA}\ref{prop:TAB:TBA:ii} 
and \cref{lem:RP}\ref{lem:RP:ii} that  
$\TAB^n=R_AR_A\TAB^n=R_A\TBA^n R_A$.
Hence $\TAB^n R_A
=R_A\TBA^n R_AR_A=R_A\TBA^n$.

\ref{prop:TAB:TBA:v}:
The statement for $R_A$
follows from combining \cref{thm:mi}
and \cref{lem:RP}\ref{lem:RP:ii}.
In view of \cref{Fact:collect:Z:fix},
\cref{lem:RP}\ref{lem:RP:i} and \cref{thm:mi}
one learns that
$Z= J_A (\fix\TAB)=J_AR_A(\fix\TAB)=J_A(\fix \TBA)$.
Finally, \cref{Fact:collect:Z:fix}, 
\cref{lem:RP}\ref{lem:RP:ii} and \ref{lem:RP:i}, 
and \cref{thm:mi}
imply 
that $K=(\Id-J_A)(\fix \TAB)=(\Id-J_A)R_A (R_A \fix \TAB)
=(J_A-\Id)\fix \TBA$.

\ref{prop:TAB:TBA:vii}:
In view of \ref{prop:TAB:TBA:ii}
%and \ref{prop:TAB:TBA:iv}
applied to $A$ and $B$
we have 
$\TAB R_AR_B=R_A\TBA R_B=R_AR_B\TAB$.
The second identity follows from combining
\cref{prop:TAB:TBA}\ref{prop:TAB:TBA:vv} 
and \cref{lem:RP}\ref{lem:RP:ii} applied
to both $A$ and $B$.
\mbox{}\par\nopagebreak\vspace{-0.05\baselineskip}
\vspace{-\abovedisplayskip}
\end{proof}

\begin{figure}[h!]
\begin{center}
\begin{tabular}{c c c}
%\hline
\includegraphics[scale=0.28]{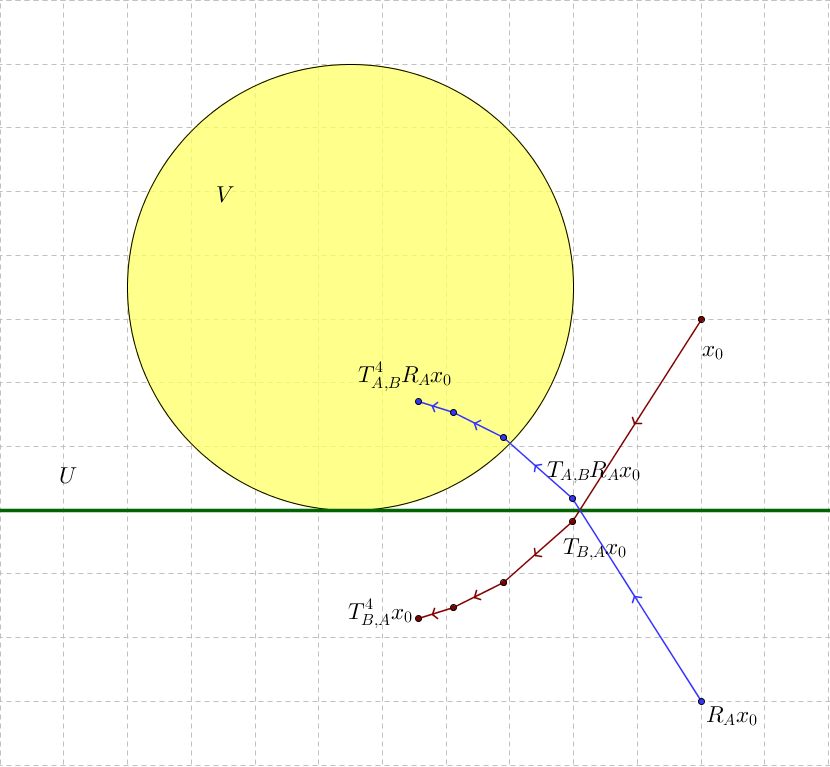}    & &
\includegraphics[scale=0.28]{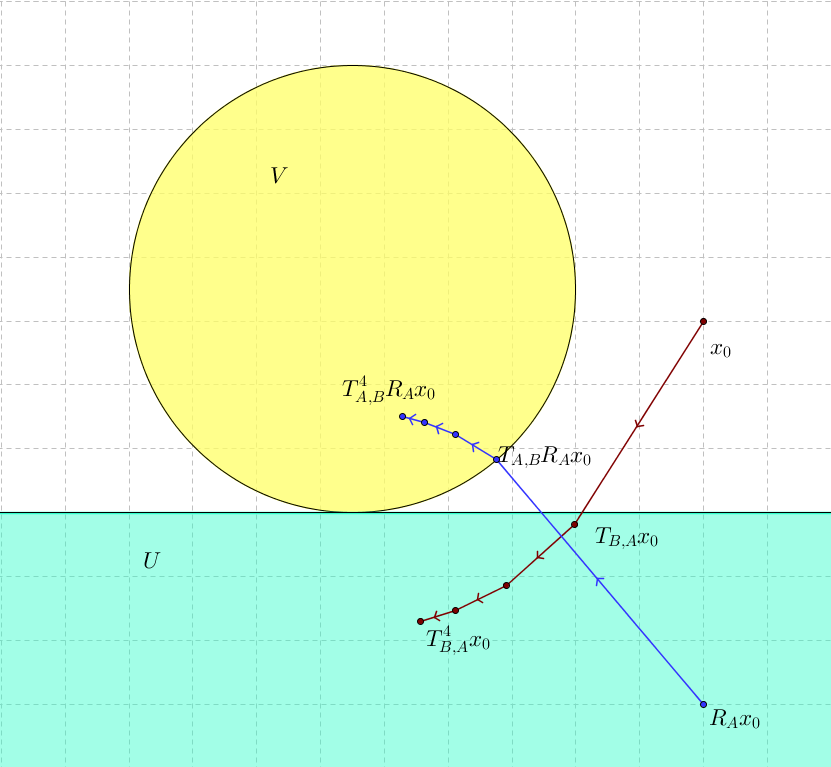}   
\end{tabular}
\end{center}
 \caption{A \texttt{GeoGebra} \cite{geogebra} snapshot.
 Left: Two closed convex sets in $\RR^2$,
 $U$ is a linear subspace (green line)
  and $V$ (the ball).
  Right: Two closed convex sets in $\RR^2$,
 $U$ is the halfspace (cyan region)
 and $V$ (the ball).
 Shown are also the first five terms of the sequences
 $(\TAB^n R_Ax_0)_{n\in \NN}$ (red points) and 
  $(\TBA^n x_0)_{n\in \NN}$ (blue points)
  in each case. The left figure illustrates
  \cref{cor:T:Ts}\ref{prop:TAB:TBA:iv} while the right figure
  illustrates the failure of this result when the subspace is
  replaced by a cone.}
\label{fig:two:set}
\end{figure}
%\mbox{}\par\nopagebreak\vspace{-0.5\baselineskip}\vspace{-
%\abovedisplayskip}
The conclusion of 
\cref{cor:T:Ts}\ref{prop:TAB:TBA:vii} 
may fail when we assume that $A$ or
$B$ is an affine, but not a normal cone, operator
as we illustrate next.
\begin{example}
Suppose that $X=\RR^2$, 
that $U=\RR\times\stb{0}$,
that $A=N_U$
and that
$B = \left(\begin{smallmatrix} 1&1\\
1&1\\ \end{smallmatrix}\right)$.

Then $B$ is linear and maximally monotone
\emph{but not a normal cone} operator
 and 
 $
\tfrac{1}{9}
\left(\begin{smallmatrix} 5&-1\\
-1&2\\ \end{smallmatrix}\right)
=
\TAB\TBA\neq 
\TBA\TAB
=\tfrac{1}{9}
\left(\begin{smallmatrix}
5&1\\
1&2\\
\end{smallmatrix}\right)
$
\end{example}

\begin{cor}\label{cor:NU:B:it}
Suppose that $U$ is an affine subspace,
that $A=N_U$ and that $Z\neq \fady$. 
Let $x$ and $y$ be in $X$.
Then the following hold:
\begin{enumerate}
\item\label{cor:NU:B:it:i}
$(\forall\nnn)$ 
$J_A \TBA^n x=J_A \TAB^n R_Ax $, and 
$(J_A \TBA^n x)_{n\in\NN}$ 
converges weakly to a point in $Z$.

\item\label{cor:NU:B:it:iii}
$\norm{\TAB x-\TAB y}
%=\norm{R_A\TAB x-R_A\TAB y}
=\norm{\TBA R_Ax-\TBA R_A y}\le 
\norm{ R_Ax-R_A y}$.
\end{enumerate}
\end{cor}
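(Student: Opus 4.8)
The plan is to derive both parts of \cref{cor:NU:B:it} directly from the structural results already established, specializing them to the case $A=N_U$ with $U$ a closed affine subspace.

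For \ref{cor:NU:B:it:i}, the first equality is essentially a pointwise version of \cref{cor:T:Ts}\ref{prop:TAB:TBA:iv}: since $\TBA^n = R_A\TAB^n R_A$, applying $J_A$ on the left and using $J_AR_A = J_A$ from \cref{lem:RP}\ref{lem:RP:i} gives $J_A\TBA^n x = J_A R_A \TAB^n R_A x = J_A\TAB^n R_A x$ for every $\nnn$. For the weak convergence, I would invoke the classical convergence theorem for the Douglas--Rachford iteration: because $Z\neq\fady$ (equivalently $\fix\TAB\neq\fady$ by \cref{Fact:collect:Z:fix}), the sequence $(\TAB^n R_A x)_{\nnn}$ converges weakly to some $\bar y\in\fix\TAB$, and hence $J_A(\TAB^n R_A x)\weak J_A\bar y\in J_A(\fix\TAB)=Z$ by the weak continuity of the linear (affine) operator $J_A=P_U$. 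Combining this with the first equality yields that $(J_A\TBA^n x)_{\nnn}$ converges weakly to the same point in $Z$. One should double-check which convergence result to cite --- the standard reference (e.g.\ \cite{BC2011}, or \cite[Corollary~4.9]{JAT2012} / Combettes) gives weak convergence of the shadow sequence $J_A\TAB^n$ to a point in $Z$; the present formulation routes this through $R_A x$ as the starting point, which is harmless.

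For \ref{cor:NU:B:it:iii}, I would start from \cref{prop:TAB:TBA}\ref{prop:TAB:TBA:ii} with $n=1$, namely $R_A\TAB = \TBA R_A$, which holds since $A$ is an affine relation. Then for $x,y\in X$,
\begin{equation*}
\norm{\TAB x - \TAB y} = \norm{R_A\TAB x - R_A\TAB y} = \norm{\TBA R_A x - \TBA R_A y},
\end{equation*}
where the first equality uses that $R_A$ is an isometry on all of $X$ by \cref{lem:RP}\ref{lem:RP:ii} (here $A=N_U$ with $U$ affine, so $R_A^2=\Id$ and $R_A$ is an isometric bijection). The inequality $\norm{\TBA R_A x - \TBA R_A y}\le \norm{R_A x - R_A y}$ is then immediate from the nonexpansiveness of $\TBA$, which follows from \cref{def:T} since $\TBA$ is a $\tfrac12$-averaged map built from the nonexpansive operators $R_A,R_B$.

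I do not anticipate a serious obstacle here: both parts are short corollaries. The only point requiring a little care is making sure the cited weak-convergence theorem for Douglas--Rachford is stated for the shadow sequence in exactly the form needed (weak convergence of $(J_A\TAB^n x_0)$ to a primal solution), and that its hypothesis $Z\neq\fady$ (equivalently $\fix\TAB\neq\fady$) is the one in force; everything else is a direct substitution into \cref{lem:RP}, \cref{prop:TAB:TBA}, and \cref{cor:T:Ts}.
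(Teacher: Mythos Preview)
Your proposal is correct and follows essentially the same approach as the paper. For \ref{cor:NU:B:it:i} the paper inserts $R_A R_A=\Id$ and then applies \cref{prop:TAB:TBA}\ref{prop:TAB:TBA:ii} and \cref{lem:RP}\ref{lem:RP:i}, whereas you invoke the already-packaged identity $\TBA^n=R_A\TAB^n R_A$ from \cref{cor:T:Ts}\ref{prop:TAB:TBA:iv}; these are the same manipulation, and your weak-convergence argument via weak continuity of $P_U$ is a valid elaboration of the paper's bare citation of \cite[Theorem~25.6]{BC2011}. For \ref{cor:NU:B:it:iii} your proof coincides with the paper's.
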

\mbox{}\par\nopagebreak\vspace{-1\baselineskip}\vspace{-
\abovedisplayskip}
\begin{proof}
\ref{cor:NU:B:it:i}:
It follows from \cref{lem:RP}\ref{lem:RP:ii}, 
\cref{prop:TAB:TBA}\ref{prop:TAB:TBA:ii}
and  \cref{lem:RP}\ref{lem:RP:i} that
$J_A \TBA^n x
=J_A  \TBA^n R_A R_Ax
=J_A R_A\TAB^n  R_Ax
=J_A\TAB^n  R_Ax$,
as claimed.
The convergence of the sequence 
$(J_A \TBA^n x)_{\nnn}$ follows from
e.g., \cite[Theorem~25.6]{BC2011}.

\ref{cor:NU:B:it:iii}:
Apply 
\cref{lem:RP}\ref{lem:RP:ii} 
with $x$
and $y$ replaced with $\TAB x$
and $\TAB y$, 
\cref{prop:TAB:TBA}\ref{prop:TAB:TBA:ii}
with $n=1$, and use 
nonexpansiveness of $\TBA$.
\end{proof}
\begin{rem}\label{rem:big}
\ 
\begin{enumerate}
\item\label{R1}
%{\bf(The Douglas--Rachford method
%for more than two operators)}
The results of {\rm\cref{cor:T:Ts}} 
and {\rm\cref{cor:NU:B:it}} are of interest when 
the Douglas--Rachford method is applied to 
find the zero of the sum of more than two operators in which case 
one can use 
a {parallel splitting method} 
{\rm (see e.g., \cite[Proposition~25.7]{BC2011})},
where one operator is the normal 
cone operator of the diagonal subspace in a product space. 
\item\label{R2}
A second glance at the proof of {\rm\cref{cor:T:Ts}\ref{prop:TAB:TBA:ii}}
reveals that the result remains true if 
$J_B$ is replaced by any operator $Q_B \colon X \to X$
(and $R_B$ is replaced by $2Q_B-\Id$, of course). 
This is interesting because in {\rm\cite{Ar-Br2013}}, 
 {\rm\cite{Ar-Br2014}},
{\rm\cite{HL13}} and {\rm\cite{HLN14}},
$Q_B$ is chosen to be a selection of the (set-valued)
projector onto a set $V$ that is \emph{not convex}. 
Hence the generalized variant of 
{\rm\cref{cor:T:Ts}\ref{prop:TAB:TBA:ii}}
 then
guarantees that the orbits of the two Douglas--Rachford operators
are related via 
\begin{equation}
(\forall\nnn)\quad \TBA^n=R_A\TAB^n R_A.
\end{equation}

\item\label{R3}
As a consequence of {\rm\ref{R2}} and
{\rm\cref{lem:RP}\ref{lem:RP:ii}},
we see that if 
\emph{linear convergence} is guaranteed for the iterates of $\TAB$
then the same holds true for 
the iterates of $\TBA$ provided that 
$U$ is a closed affine subspace,
$V$ is a nonempty closed set,
$A=N_U$ and $J_B$ is a selection of the projection onto $V$. 
This is not particularly striking when we compare to sufficient
conditions that are already 
symmetric in A and B (such as, e.g., $\ri U \cap \ri V\neq \fady$
 in {\rm\cite{BNP2014}} and {\rm\cite{Phan2014}});
however, this is a new insight when the sufficient conditions are 
\emph{not symmetric} (as in, e.g., {\rm\cite{Ar-Br2013}}, {\rm\cite{B-Sims2011}}
{\rm\cite{HL13}} and {\rm\cite{HLN14}}). 

\item 
A comment similar to {\rm\ref{R3}} can be made for
\emph{finite convergence} results; see {\rm\cite{Spin85}} and  
{\rm\cite{BDNP15}} for
\emph{nonsymmetric} sufficient conditions. 
  \end{enumerate}
\end{rem}

We now turn to the Borwein--Tam method \cite{B-Tam}. 
\begin{prop}\label{Prop:BT}
Suppose that $U$ is an affine
subspace of $X$, that $A=N_U$, and set 
\begin{equation}\label{e:def:CDR}
T_{[A,B]}:=\TAB\TBA.
\end{equation}
\mbox{}\par\nopagebreak\vspace{-1.5\baselineskip}\vspace{-
\abovedisplayskip}
Then the following holds: 
\mbox{}\par\nopagebreak\vspace{-0.5\baselineskip}\vspace{-
\abovedisplayskip}
\begin{enumerate}
\item\label{Prop:BT:i}
$T_{[A,B]}=R_AT_{[B,A]}R_A
%=(R_A\TAB)^2=(\TBA R_A)^2
=(\TAB R_A)^2=(R_A\TBA)^2.$
\item\label{Prop:BT:ii}
Suppose that $V$ is an affine
subspace and that $B=N_V$.
Then\footnote{See \cite[Proposition~3.5]{JAT2014} for
the case when $U$ and $V$ are linear subspaces.}
 $T_{[A,B]}=T_{[B,A]}$.
Consequently
$T_{[A,B]}=(R_B\TAB)^2=(\TBA R_B)^2
%=(R_B\TBA)^2=(\TAB R_B)^2
=\tfrac{1}{2}(\TAB+\TBA)$, and
$T_{[A,B]}$ is firmly nonexpansive.
\end{enumerate}
\end{prop}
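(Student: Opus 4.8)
The plan is to obtain everything mechanically from the commutation identities already in hand, together with the involution property of the reflected resolvent of the normal cone of an affine subspace. Since $A=N_U$ with $U$ affine, \cref{lem:RP}\ref{lem:RP:ii} gives $R_A^2=\Id$, and \cref{prop:TAB:TBA}\ref{prop:TAB:TBA:ii} (with $n=1$) together with \cref{cor:T:Ts}\ref{prop:TAB:TBA:iv} supplies the four identities $R_A\TAB=\TBA R_A$, $R_A\TBA=\TAB R_A$, $\TAB=R_A\TBA R_A$, and $\TBA=R_A\TAB R_A$. Beyond these, no fresh computation is needed for \ref{Prop:BT:i}.

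For \ref{Prop:BT:i} I would first compute $(\TAB R_A)^2=\TAB(R_A\TAB)R_A=\TAB\TBA R_A^2=\TAB\TBA=T_{[A,B]}$, using $R_A\TAB=\TBA R_A$ and $R_A^2=\Id$. Since $R_A\TBA=\TAB R_A$, it is immediate that $(R_A\TBA)^2=(\TAB R_A)^2=T_{[A,B]}$. Finally $R_AT_{[B,A]}R_A=R_A\TBA\TAB R_A=(R_A\TBA)(\TAB R_A)=(\TAB R_A)(\TAB R_A)=(\TAB R_A)^2=T_{[A,B]}$, which accounts for all four expressions in \ref{Prop:BT:i}.

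For \ref{Prop:BT:ii}, the identity $T_{[A,B]}=T_{[B,A]}$ is precisely $\TAB\TBA=\TBA\TAB$, which is \cref{cor:T:Ts}\ref{prop:TAB:TBA:vii}. For the squared forms, since $B=N_V$ is now also an affine normal cone operator, \cref{prop:TAB:TBA}\ref{prop:TAB:TBA:ii} applied with the roles of $A$ and $B$ interchanged gives $R_B\TBA=\TAB R_B$, and \cref{lem:RP}\ref{lem:RP:ii} applied to $B$ gives $R_B^2=\Id$; then $(R_B\TAB)^2=R_B(\TAB R_B)\TAB=R_B R_B\TBA\TAB=\TBA\TAB=T_{[A,B]}$ and $(\TBA R_B)^2=\TBA(R_B\TBA)R_B=\TBA\TAB R_B^2=\TBA\TAB=T_{[A,B]}$. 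For the averaging formula I would expand directly from \cref{def:T}: $4\TAB\TBA=(\Id+R_BR_A)(\Id+R_AR_B)=\Id+R_AR_B+R_BR_A+R_BR_A^2R_B$, and since $R_A^2=\Id$ the last term equals $R_B^2=\Id$, so $4\TAB\TBA=2\Id+R_AR_B+R_BR_A=2(\TAB+\TBA)$, i.e. $T_{[A,B]}=\tfrac{1}{2}(\TAB+\TBA)$. Firm nonexpansiveness of $T_{[A,B]}$ then follows because $\TAB=\tfrac{1}{2}(\Id+R_BR_A)$ and $\TBA=\tfrac{1}{2}(\Id+R_AR_B)$ are firmly nonexpansive ($R_BR_A$ and $R_AR_B$ being nonexpansive) and a convex combination of firmly nonexpansive operators is firmly nonexpansive.

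I do not expect a serious obstacle here: the whole argument is bookkeeping with the established commutation relations and the involutions $R_A^2=R_B^2=\Id$. The only points needing care are (a) selecting, at each step, the correct one of the several equivalent commutation identities coming from \cref{prop:TAB:TBA} and \cref{cor:T:Ts}, and (b) in \ref{Prop:BT:ii}, recording explicitly that \cref{prop:TAB:TBA}\ref{prop:TAB:TBA:ii} and \cref{lem:RP}\ref{lem:RP:ii} may be applied to $B$ because $B=N_V$ with $V$ affine, which is exactly the extra hypothesis there.
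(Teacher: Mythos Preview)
Your proof is correct and follows essentially the same approach as the paper: both arguments derive everything from the commutation identities in \cref{prop:TAB:TBA}\ref{prop:TAB:TBA:ii} and \cref{cor:T:Ts}, together with the involutions $R_A^2=R_B^2=\Id$ from \cref{lem:RP}\ref{lem:RP:ii}. The only minor difference is that for \ref{Prop:BT:ii} the paper obtains the squared forms by invoking \ref{Prop:BT:i} with $A$ and $B$ interchanged and imports the averaging formula $T_{[A,B]}=\tfrac12(\TAB+\TBA)$ from \cite[Remark~4.1]{B-Tam}, whereas you verify both by direct expansion; your version is slightly more self-contained but otherwise equivalent.
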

\begin{proof}
\ref{Prop:BT:i}:
Using \cref{e:def:CDR} and
\cref{cor:T:Ts}\ref{prop:TAB:TBA:iv}
with $n=1$ we obtain 
%\begin{align}
$
T_{[A,B]}
%&
%=\TAB\TBA
=R_A\TBA R_A \TBA=R_A\TBA \TAB R_A=
R_AT_{[B~A]} R_A%\nonumber\\
%&
=\TAB R_A\TAB R_A
=(\TAB R_A)^2
=(R_A\TBA)^2.
$
%\end{align}

\ref{Prop:BT:ii}:
The identity $T_{[A,B]}=T_{[B,A]}$ 
follows from 
\cref{cor:T:Ts}\ref{prop:TAB:TBA:vii}
 and \cref{e:def:CDR}. 
Now combine with \ref{Prop:BT:i}
with $A$ and $B $ switched, and use
\cite[Remark~4.1]{B-Tam}.
That 
 $T_{[A,B]}$ (hence $T_{[B,A]}$)
 is firmly nonexpansive follows from 
 the firm nonexpansiveness of 
 $\TAB$ and $\TBA$ and
 the fact
 that the class of firmly nonexpansive
 operators is closed under convex combinations
 (see, e.g., \cite[Example~4.31]{BC2011}). 
\end{proof}

Following {\rm\cite{B-Tam}}, the Borwein--Tam method
specialized to two nonempty closed convex subsets $U$
and $V$ of $X$, iterates the operator 
$T_{[A,B]}$ of \eqref{e:def:CDR}, 
where $A=N_U$ and $B=N_V$.
We conclude with an example that
shows that if $A$ or $B$ is not an
affine normal cone operator 
then $T_{[A,B]}$
and $T_{[B,A]}$
need not be firmly nonexpansive.
\begin{example}
Suppose that $X=\RR^2$,
that $U=\RR_+\cdot(1,1)$,
that $V=\RR\times \stb{0}$,
that $A=N_U$
and that
$B=N_V$.
Then neither $T_{[A,B]}$
nor $T_{[B,A]}$ is
firmly nonexpansive.
\end{example}
\mbox{}\par\nopagebreak\vspace{-1.5\baselineskip}\vspace{-
\abovedisplayskip}
\begin{proof}
Let $(x,y)\in \RR^2$.
Using \cref{def:T} we verify that 
$\TAB(x,y)=
\minibk{\tfrac{1}{2}(x+y)^+,y-\tfrac{1}{2}(x+y)^+}$
and 
$
\TBA(x,y)= \minibk{\tfrac{1}{2}(x-y)^+
,y+\tfrac{1}{2}(x-y)^+}$.
Now let $\alpha>0$, let $x=(-2\alpha,2\alpha)$
and let $y=(0,0)$.
A routine calculation shows that 
$T_{[A,B]}x=\TAB\TBA (-2\alpha,2\alpha)=(\alpha,\alpha)$
and $T_{[A,B]}y=\TAB\TBA (0,0)=(0,0)$, hence
$\innp{T_{[A,B]}x-T_{[A,B]}y, (\Id-T_{[A,B]})x-(\Id-T_{[A,B]})y}
=\innp{(\alpha,\alpha), (-3\alpha,\alpha)}=-2\alpha^2<0$. 
Applying similar argument to 
$T_{[B,A]}$ with $x=(-2\alpha,-2\alpha)$
and $y=(0,0)$ shows that
$\innp{T_{[B,A]}x-T_{[B,A]}y, (\Id-T_{[B,A]})x-(\Id-T_{[B,A]})y}
=\innp{(-3\alpha,-\alpha),(\alpha,-\alpha)}=
-2\alpha^2<0$.
It then follows from e.g., \cite[Proposition~4.2]{BC2011}
that neither $T_{[A,B]}$ nor $T_{[B,A]}$ is firmly nonexpansive.
\end{proof}

\vskip 8mm

\mbox{}\par\nopagebreak\vspace{-3\baselineskip}\vspace{-
\abovedisplayskip}
%\small

\end{document}